\definecolor{aliceblue}{rgb}{0.9, 0.95, 1.0}
\numberwithin{equation}{section}
\newcommand\Z{{\mathbb Z}}
\newcommand{\pslc}{{\mathrm{PSL}(2,\mathbb{C})}}
\newcommand{\pslr}{{\mathrm{PSL}(2,\mathbb{R})}}
\def\namedlabel#1#2{\begingroup
   \def\@currentlabel{#2}%
   \label{#1}\endgroup
}
\theoremstyle{plain}                    
\newtheorem{thm}{Theorem}[section]
\newtheorem{lem}[thm]{Lemma}
\newtheorem*{thmnn}{Theorem}
\theoremstyle{definition}
\newtheorem{defn}[thm]{Definition}
\newtheorem*{coex}{Counterexample}
\newtheorem*{qs}{Question}
\begin{document}

\title[Counterexamples to the simple loop conjecture in higher-dimension]{Counterexamples to the simple loop conjecture in higher-dimension}

\author{Gianluca Faraco}
\address[Gianluca Faraco]{Dipartimento di Matematica e Applicazioni U5, Universita` degli Studi di Milano-Bicocca, Via Cozzi 55, 20125 Milano, Italy}
\email{gianluca.faraco@unimib.it}

\keywords{}
\subjclass[2020]{57N35}%
\date{\today}
\dedicatory{}

\begin{abstract}
For every $g\ge 2$ and $n\ge4$, we provide an $n-$manifold $M$ and a continuous $2-$sided map $f\colon S\longrightarrow M$, where $S$ is a closed genus $g$ surface, such that no simple loop is contained in $\textnormal{ker}(\,f_*\,)$. This provides a counterexample to the the classical simple loop conjecture for surfaces to manifolds of dimensions at least four.
\end{abstract}

\maketitle
\tableofcontents

\section{Introduction}

\noindent For a closed surface $S$ of genus $g\ge2$, the so-called Simple Loop Conjecture stated that if a continuous map $f\colon S\longrightarrow \Sigma$ between closed surfaces induces a non-injective map of fundamental groups $f_*\colon\pi_1(S)\to\pi_1(\Sigma)$, then there exists an essential simple closed curve in $\textnormal{ker}(f_*)$. In \cite{GD}, Gabai provided a positive answer to this conjecture. It is natural to wonder what happens if the target is replaced with fundamental groups of higher dimensional manifolds and hence we may restate the Simple Loop Conjecture as

\medskip

\noindent \textbf{Simple Loop Conjecture.} \textit{Let $f\colon S\longrightarrow M$ be a continuous $2-$sided map from a closed surface $S$ to an $n-$dimensional manifold $M$ and let $f_*\colon\pi_1(S)\longrightarrow \pi_1(M)$ be the induced map on the fundamental group. If $\textnormal{ker}(f_*)$ is not trivial, then there is a simple closed essential loop in $\textnormal{ker}(f_*)$.}

\medskip

\noindent In \cite{HJ} Hass proved that the Simple Loop Conjecture holds for maps from a closed surface $S$ to a Seifert-fibered $3$-manifold $M$. A longstanding question has been whether result still holds if the target is replaced by any orientable $3$-manifold. Although this is known for graph-manifolds, see \cite{RW}, the question is essentially open. More recently, in \cite{ZD} the Simple loop conjecture has been verified for $3-$manifolds modelled on \textit{Sol} geometry. In the present note we aim to address the case of higher dimensional manifolds by showing the following

\begin{thmnn}
Let $S$ be a closed surface of genus $g\ge2$. For any $n\ge4$ there is a $n-$dimensional manifold $M$ and a continuous $2-$sided map $f\colon S\longrightarrow M$ such that the kernel $\textnormal{ker}(f_*)$ of the induced map $f_*\colon\pi_1(S)\longrightarrow \pi_1(M)$ is non-trivial and it does not contain any essential simple closed curve.
\end{thmnn}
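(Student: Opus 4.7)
My plan is to reduce the theorem to the following group-theoretic claim: \emph{there exists a non-trivial $w\in\pi_1(S)$ whose normal closure $\langle\!\langle w\rangle\!\rangle\trianglelefteq\pi_1(S)$ contains no essential simple closed curve.} Given such $w$, the quotient $Q:=\pi_1(S)/\langle\!\langle w\rangle\!\rangle$ is finitely presented. For any $n\ge 4$ one can realise $Q$ as $\pi_1(M)$ for a closed orientable $n$-manifold $M$, and build a continuous $2$-sided map $f\colon S\to M$ inducing the quotient $\pi_1(S)\twoheadrightarrow Q$; then $\ker f_*=\langle\!\langle w\rangle\!\rangle$ is non-trivial and, by the choice of $w$, contains no essential simple closed curve, proving the theorem.

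\textbf{Finding $w$ (the main obstacle).} To produce such $w$ I would exhibit a non-faithful representation $\rho\colon\pi_1(S)\to\pslc$ whose kernel contains no essential simple closed curve; then any $w\in\ker\rho\setminus\{1\}$ satisfies $\langle\!\langle w\rangle\!\rangle\subseteq\ker\rho$, so no simple loop enters $\langle\!\langle w\rangle\!\rangle$. The existence of such $\rho$ is where the real work lies. In the character variety $X_g:=\Hom(\pi_1(S),\pslc)/\!/\pslc$, of complex dimension $6g-6$, each essential simple closed curve $\gamma$ determines a proper algebraic subvariety $V_\gamma=\{\rho:\rho(\gamma)=1\}$ of codimension three, and the family $\{V_\gamma\}_{\gamma\in\mathcal S}$ is countable. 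Starting from a Fuchsian representation $\rho_0$ (faithful and avoiding every $V_\gamma$), I would deform $\rho_0$ in $X_g$ along a one-parameter family $\rho_t$ that enters the non-faithful locus while staying off the countable union $\bigcup_\gamma V_\gamma$. The deformation can be arranged by a Kapovich--Millson bending or a carefully chosen algebraic path; alternatively, a small-cancellation argument over $\pi_1(S)$ can exhibit a specific $w$ whose normal-closure relators are incompatible with the cyclic reduction of any essential simple closed curve. The difficulty is that one must simultaneously avoid countably many simple loops, and a pure Baire-category argument is circular because $V_w\not\subseteq V_\gamma$ is itself equivalent to $\gamma\notin\langle\!\langle w\rangle\!\rangle$; so the deformation (or the small-cancellation $w$) must be produced by a positive construction.

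\textbf{Realising $Q$ and building the map.} Once $w$ has been chosen, form the $2$-complex $K:=S\cup_w D^2$, which has $\pi_1(K)=Q$. For $n\ge 4$ embed $K$ piecewise-linearly in $\R^{n+1}$ (possible because $\dim K=2<n+1$), take a smooth closed regular neighbourhood $N\subset\R^{n+1}$, and set $M:=\partial N$. A general-position argument valid precisely when $n\ge 4$ shows $\pi_1(M)\cong\pi_1(N)\cong\pi_1(K)=Q$, and $M$ is a closed orientable $n$-manifold. The map $f\colon S\to M$ is built cell by cell on the standard CW-structure of $S$: send the $2g$ one-cells to loops in $M$ realising the images in $Q=\pi_1(M)$ of the generators of $\pi_1(S)$, and extend over the unique $2$-cell because the surface relator $\prod_i[a_i,b_i]$ is trivial in $\pi_1(S)$, hence null-homotopic in $M$. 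Then $f_*\colon\pi_1(S)\to\pi_1(M)$ is the required quotient with kernel $\langle\!\langle w\rangle\!\rangle$. Two-sidedness is automatic: both $S$ and $M$ are orientable, so after a small perturbation of $f$ to a smooth generic map the normal bundle of its image is orientable, making $f$ a $2$-sided map.
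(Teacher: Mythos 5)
Your overall architecture is sound and matches the paper's in its second half: realise a finitely presented quotient $Q$ of $\pi_1(S)$ as $\pi_1(M)$ for a closed orientable $n$-manifold ($n\ge 4$), build $f$ cell by cell on the CW-structure of $S$, and note that $2$-sidedness is automatic when both $S$ and $M$ are orientable (your aside about normal bundles is unnecessary — with the paper's definition via orientation characters, both characters are trivial and the identity $\chi_S=\chi_M\circ f_*$ holds for free). The regular-neighbourhood-boundary realisation of $Q$ is a legitimate alternative to the paper's surgery on $\#_k(\mathbb S^{n-1}\times\mathbb S^1)$.

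However, there is a genuine gap, and it sits exactly at the heart of the theorem: you never actually produce the element $w$ (equivalently, the non-injective quotient of $\pi_1(S)$ whose kernel misses every essential simple closed curve). You candidly flag this yourself — the character-variety deformation into the non-faithful locus of $\pslc$ while avoiding the countable family $\{V_\gamma\}$ requires knowing that some component of some $V_w$ ($w$ non-simple) is not swallowed by $\bigcup_\gamma V_\gamma$, which is essentially the statement you are trying to prove; and the ``small-cancellation $w$'' is only named, not constructed. A proof cannot end with ``the deformation must be produced by a positive construction.'' The paper closes this gap with a short, elementary, and completely explicit construction attributed to Casson: take the cover $\Sigma'\to S$ corresponding to $\ker\big(\pi_1(S)\to \mathrm H_1(S,\mathbb Z_2)\big)$, so that no non-separating simple loop lifts; prove (the paper's Lemma \ref{lem:nonseplifts}, via a degree count on the complementary components of the lifts) that every separating simple loop lifts to a \emph{non-separating} simple loop on $\Sigma'$; then repeat the mod-$2$ homology cover once more to kill those as well. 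The composite cover is regular because both subgroups are characteristic, so the deck quotient $G$ is a \emph{finite} group and $\pi_1(S)\to G$ has non-geometric kernel. If you want to salvage your route without reproducing that argument, you could instead invoke the known $\pslc$ counterexamples of Cooper--Manning or Louder and extract $w$ from the kernel there, but as written the key existence step is asserted rather than proved.
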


\noindent The assumption $n\ge4$ plays an important role in the construction below. In fact, as we shall recall in Section \S\ref{sec:rp}, it is well-known fact that any finitely presented group $G$ can be realised as the fundamental group of some closed $n-$manifold $M$ for any $n\ge4$. Therefore, the basic idea is to find a finitely presented group $G$ and an epimorphism $\rho\colon\pi_1(S)\longrightarrow G$ whose kernel is non-trivial and such that no element in the kernel represents a simple closed essential loop in $S$. The desired counterexample follows by realising a continuous mapping $f\colon S\longrightarrow M$ such that $f_*=\rho$. Therefore the Simple Loop Conjecture, as stated, does not hold in dimension at least $4$. This leads to the following question which seems to be of general interest:

\begin{qs}
For a closed manifold $M$ of dimension $n\ge4$, provide necessary and sufficient conditions on $\pi_1(M)$ for the Simple Loop Conjecture to be verified.
\end{qs}

\smallskip

\noindent We conclude this introduction by mentioning a more challenging version of the Simple Loop Conjecture in which the target is replaced with a Lie group. In this direction, results have been provided for relevant Lie groups like $\pslr$ and $\pslc$. For these groups, it is known that the Simple Loop Conjecture fails. In fact, counterexamples for $\pslc$ were first provided by Cooper and Manning in \cite{CM}, and this answered a question of Minsky in \cite[Question 5.3]{Minsky}. Further counterexamples are described by Louder in \cite{Louder} and Calegari in \cite{Calegari}. For $\pslr$, in \cite{MaKat} provided explicit counterexamples; however they all come from indiscrite representation. It may be asked whether the Simple loop conjecture holds for discrete representations in $\pslr$. This is answered positively in a collaboration with Gupta, see \cite{FG3}.

\subsection*{Acknowledgments} The author wish to thank his friend Lorenzo Ruffoni for the comments which led to significant improvements of the exposition. He also wishes to thank Stefano Francaviglia and Subhojoy Gupta for their comments about this work.

\medskip

\section{$1-$sided vs $2-$sided maps}

\noindent Let $M$ be a connected manifold and let $\pi_1(M)$ denote its fundamental group. An \textit{orientation character} of $M$ is a representation $\chi_M\colon\pi_1(M)\longrightarrow \mathbb Z_2$ such that $\chi_M(\gamma)=1$ if and only if $\gamma$ acts on the universal cover of $M$ as an orientation-reversing homeomorphism. The character $\chi_M$ is trivial if and only if $M$ is orientable. See \cite{DK} for a reference.

\begin{defn}\label{def:sidesmap}
Let $M$ and $N$ be connected manifolds and let $\chi_M$ and $\chi_N$ be the respective orientation characters. A map $f\colon M\longrightarrow N$ is said to be $2-$\textit{sided} if 
\begin{equation}
    \chi_M= \chi_N\circ f_*
\end{equation}
holds, where $f_*\colon\pi_1(M)\longrightarrow\pi_1(N)$. Otherwise $f$ is said to be $1-$\textit{sided}. 
\end{defn}

\noindent In dimension $3$, the simple loop conjecture is known to fail for $1-$sided maps $f\colon S\longrightarrow M$ thus making the $2-$sided assumption necessary. Here is a counterexample that one may use to show that the same assumption is needed in higher dimension. 

\begin{coex}[in dimension $3$]
An easy case to think about is that of a $1-$sided embedded torus $\imath\colon S\cong\mathbb S^1\times\mathbb S^1\hookrightarrow \mathbb{RP}^2\times\mathbb S^1$. In fact $\imath_*\colon\pi_1(S)\cong\mathbb Z\times\mathbb Z\to \mathbb Z_2\times\mathbb Z$ has non-trivial kernel comprising all and only the curves corresponding to the pairs $\{(2k,0)\mid k\in\mathbb Z\}$. It is easy to see that these are all non-simple. This clearly makes the $2-$sided assumption in dimension $3$ necessary.
\end{coex}

\noindent For $n\ge4$, the counterexample above easily extends to $n$-dimensional manifolds. In fact, one may consider $M=\mathbb{RP}^2\times\mathbb S^1\times \mathbb S^{n-3}$ and hence the natural inclusion $\jmath\colon \mathbb{RP}^2\times\mathbb S^1\hookrightarrow M$.

\section{Finding a finitely presented group $G$}\label{sec:fpg}

\noindent Let $S$ be a closed surface of genus $g\ge2$. For a group $G$, a representation $\rho\colon \pi_1(S)\longrightarrow G$ has \textit{geometric kernel} if some nontrivial element in the kernel can be represented by a simple loop. Otherwise we shall say that $\rho$ has \textit{non-geometric kernel}. In this section we aim to realize a finite group $G$ -- and thence finitely presented -- and an epimorphism $\rho\colon\pi_1(S)\longrightarrow G$ with non-geometric kernel. This process is not new in literature and, indeed, it is attributed to Casson. We provide here a direct construction since we cannot find a direct reference. The basic idea relies on the following observation: If $p\colon\Sigma \longrightarrow S$ is a regular covering such that no essential simple closed curve on $S$ lifts to $\Sigma$ then the natural projection
\begin{equation}\label{eq:proj}
    \pi_1(S)\longrightarrow \frac{\pi_1(S)}{p_*\big(\pi_1(\Sigma)\big)}=G
\end{equation}
has non-geometric kernel.

\medskip

\noindent Given a closed genus $g$ surface $S$, let us consider the covering space $p\colon\Sigma'\longrightarrow S$ corresponding to the kernel of the epimorphism $\phi\colon\pi_1(S)\longrightarrow \textnormal{H}_1(S,\,\mathbb Z_2)$. Observe that simple non-separating loops on $S$ are generators of $\textnormal{H}_1(S,\,\mathbb Z_2)$ and hence \textit{do not lift} to $\Sigma'$. The crucial observation here is the following

\begin{lem}\label{lem:nonseplifts}
Every simple separating loop lifts to $\Sigma'$ and each of the preimages is a simple non-separating loop on $\Sigma'$.
\end{lem}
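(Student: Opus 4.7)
The plan is to argue in three steps: (i) show that $\gamma$ lifts; (ii) count the lifts and verify they are simple; (iii) show that each lift is non-separating, which will be the crux of the argument.

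For (i), since $\gamma$ is separating it bounds a subsurface of $S$, hence is null-homologous in $\textnormal{H}_1(S,\mathbb Z)$ and \emph{a fortiori} in $\textnormal{H}_1(S,\mathbb Z_2)$. Because $p\colon\Sigma'\to S$ is the cover associated to $\ker\phi$, the loop $\gamma$ admits a closed lift. For (ii), the deck group $G\defeq\textnormal{H}_1(S,\mathbb Z_2)\cong\mathbb Z_2^{2g}$ acts on the set of lifts of $\gamma$ with stabiliser $\langle[\gamma]\rangle=\{0\}$, hence simply transitively; this produces exactly $|G|=2^{2g}$ mutually disjoint lifts, each mapping homeomorphically onto $\gamma$ and therefore simple.

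For (iii), fix one lift $\tilde\gamma$ and aim to show that $\Sigma'\setminus\tilde\gamma$ is connected. Decompose $S=S_1\cup_\gamma S_2$ with $S_i$ of genus $h_i\ge 1$ and $h_1+h_2=g$. A Mayer--Vietoris argument, using that $\gamma$ is null-homologous in each $S_i$, shows that the images $H_i$ of the natural maps $\pi_1(S_i)\to\pi_1(S)\to G$ satisfy $H_i\cong\mathbb Z_2^{2h_i}$ and $G=H_1\oplus H_2$. Standard covering theory then yields that $p^{-1}(S_i)$ has $[G:H_i]=2^{2h_{3-i}}$ connected components, each of which is a $2^{2h_i}$-fold cover of $S_i$ with $2^{2h_i}$ boundary circles lying in $p^{-1}(\gamma)$.

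I would then encode the gluing pattern as a bipartite graph $\Gamma$ whose vertices are the components of $p^{-1}(S_1)\sqcup p^{-1}(S_2)$ and whose edges are the $2^{2g}$ lifts of $\gamma$, each such edge joining the two components that it bounds. Labelling lifts by elements of $G$ and components by cosets, the assignment $g\mapsto(g+H_1,\,g+H_2)\in G/H_1\times G/H_2$ is a bijection precisely because $G=H_1\oplus H_2$; consequently $\Gamma$ is the complete bipartite graph $K_{2^{2h_2},\,2^{2h_1}}$. Since both sides of $\Gamma$ have at least $4$ vertices, $\Gamma$ has edge-connectivity at least $4$, so removing the edge corresponding to $\tilde\gamma$ leaves $\Gamma$ connected; therefore $\Sigma'\setminus\tilde\gamma$ is connected and $\tilde\gamma$ is non-separating. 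The most delicate part is identifying $\Gamma$ with $K_{m,n}$, which relies crucially on the direct-sum decomposition $G=H_1\oplus H_2$, and this is where I expect the main technical bookkeeping to lie.
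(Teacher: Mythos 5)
Your argument is correct, but it proceeds along a genuinely different route from the paper's. The paper argues by contradiction: assuming every lift of $\gamma$ separates $\Sigma'$, it cuts $\Sigma'$ along all $2^{2g}$ lifts, observes that the resulting dual graph would be a tree with $2^{2g}+1$ vertices, and runs a degree count ($\sum d_i = 2^{2g}$ over each side) to force some component to map with degree $1$ onto $S_1$ or $S_2$; that component, not being a disc, contains a simple non-separating loop of $S$ that lifts, contradicting the defining property of $\Sigma'$. You instead compute the dual graph exactly: using the Mayer--Vietoris splitting $\textnormal{H}_1(S,\mathbb Z_2)=H_1\oplus H_2$ with $H_i\cong\mathbb Z_2^{2h_i}$ (valid since $\gamma$ is null-homologous in each $S_i$), you identify the components of $p^{-1}(S_i)$ with the cosets $G/H_i$ and check that $g\mapsto(g+H_1,g+H_2)$ is a bijection, so the dual graph is the complete bipartite graph $K_{2^{2h_2},2^{2h_1}}$; removing one edge leaves it connected, so each lift is non-separating. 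Both proofs are sound. The paper's is more economical, using only a parity count and the (tacit) fact that disjoint individually-separating curves cut a surface into a tree pattern; yours requires more bookkeeping but is direct rather than by contradiction, yields strictly more information (the exact number of complementary components of $p^{-1}(S_i)$ and the full gluing pattern), and in particular shows the dual graph is as far from a tree as possible. Two small points you should make explicit if you write this up: the loop $\gamma$ must be taken \emph{essential} so that $h_1,h_2\ge1$ (the paper assumes this too), and the labelling of lifts and components by $G$ and $G/H_i$ requires fixing a basepoint on $\gamma$ together with a chosen lift; also note that all you really need is that $K_{m,n}$ with $m,n\ge2$ has no bridges, so edge-connectivity $2$ already suffices.
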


%\medskip

\begin{proof}
In the first place we notice that $p\colon\Sigma'\longrightarrow S$ is a $2^{2g}$ degree covering and hence $\Sigma'$ is a closed surface. Let $\gamma$ be a simple essential separating loop in $S$ \textit{i.e.} $S=S_1\,\cup_\gamma\,S_2$. We argue by contradiction and assume that some -- and hence any -- lift separates $\Sigma'$ into two sub-surfaces. Since $\deg(\,p\,)=2^{2g}$, there are $2^{2g}$ lifts, say $\gamma_1',\dots,\gamma_{2^{2g}}'$, of $\gamma$. All these curves being separating, the cut surface 
\begin{equation}
    \Sigma'\setminus \bigcup_{i=1}^{2^{2g}} \gamma_i'
\end{equation}
\noindent has $2^{2g}+1$ connected components. A few of these components, say $m$, project to $S_1$ and the remaining $2^{2g}+1-m$ components project to $S_2$. The covering projection $p$ restricts to each of these components to a cover of either $S_1$ or $S_2$. Let $\Sigma_1',\dots,\Sigma_m'$ denote the connected components of the cut surface that project to $S_1$ and denote by $\Sigma_{m+1}',\dots,\Sigma_{2^{2g}+1}'$ the remaining connected components -- of course they all project to $S_2$. Let $d_i$ be defined as the degree of $p$ restricted to the connected component $\Sigma_i'$, namely
\begin{equation}
    d_i=\deg\big(\, p_{|\,\Sigma_i'}\colon\Sigma_i'\longrightarrow p(\Sigma_i')\,\big).
\end{equation}
\noindent We claim that $d_i=1$ for at least one index $i\in\{1,\dots,2^{2g}+1\}$. In fact, suppose on the contrary that $d_i\ge2$ for all $i=1,\dots,2^{2g}+1$. Then we can easily notice that 
\begin{equation}
    d_1+\cdots+d_m=2^{2g}\ge2m \quad  \text{ and } \quad d_{m+1}+\cdots+d_{2^{2g}+1}=2^{2g}\ge 2^{2g+1}-2m+2.
\end{equation}
However, these inequalities put together readily imply the following chain of inequalities
\begin{equation}
    0<2^{2g-1}+1\le m \le 2^{2g-1}
\end{equation}
\noindent which is clearly impossible. Therefore there is at least one component, say $\Sigma_1'$ up to relabelling, such that $d_1=1$. This implies that $p_{|\,\Sigma_1'}$ is a homeomorphism. However, this is impossible, because $\Sigma_1'$ is not a disc and hence it must contain a non-separating simple loop, which is a lift of its image under the covering projection. This leads to the desired contradiction.
\end{proof}

%it works if we replace 2^{2g} with d. In that case we would get d+2\le m\le d

%For our convenience, let us fix an orientation on $\gamma$ so that it makes sense to distinguish the left and the right side of $\Sigma$. Two different lifts, say $\gamma'_1$ and $\gamma'_2$, of $\gamma$ yield two different splittings and, since the covering $p$ is regular, there is a deck transformation of $\Sigma'$ that maps $\gamma_1'$ to $\gamma_2'$. This deck transformation must take the left and right side of $\gamma_1'$ homeomorphically to the left and right side of $\gamma_2'$ respectively. %It follows that for any pair of lifts the left sides are pairwise disjoint -- in particular, no left subsurface does not contain any lifts of $\gamma$ in its interior. The 

\smallskip

\noindent In order to get the desired group $G$ we apply the argument above one more time. Consider the covering space $q\colon\Sigma\longrightarrow \Sigma'$ corresponding to the kernel of $\pi_1(\Sigma')\longrightarrow \textnormal{H}_1(\Sigma',\,\mathbb Z_2)$. Since no simple non-separating loop on $\Sigma'$ lifts to $\Sigma$, it directly follows that no essential simple loop on $S$ lifts to $\Sigma$ via the covering projection $p\circ q\colon\Sigma\longrightarrow \Sigma'\longrightarrow S$. It remains to show that $p\circ q$ is a regular covering. However, this follows by construction because $p_*\big(\pi_1(\Sigma')\big)$ is characteristic in $\pi_1(S)$ and, in the same fashion, $q_*\big(\pi_1(\Sigma)\big)$ is characteristic in $\pi_1(\Sigma')$. As a direct consequence we have that $(p\circ q)_*(\pi_1(\Sigma))$ is characteristic in $\pi_1(S)$ and hence normal. Thence the covering $p\circ q$ is regular. We consider the representation $\rho$ as above in \eqref{eq:proj}, namely
\begin{equation}\label{eq:proj2}
    \rho\colon\pi_1(S)\longrightarrow \frac{\pi_1(S)}{(p\circ q)_*(\pi_1(\Sigma))}=G.
\end{equation}
We may observe that a closed path in $\pi_1(S)$ is in $\textnormal{ker}(\rho)$ if and only if the path lifts to a closed path in $\Sigma$. However, by construction, no simple closed loop in $S$ lifts to $\Sigma$ and hence the representation $\rho$ has non-geometric kernel as desired.

\smallskip

\section{Realization process}\label{sec:rp}

\noindent In this section we aim to provide the desired counterexample of the simple loop conjecture in every dimension $n\ge4$.

\subsection{Finding a $n-$manifold with prescribed fundamental group} It is a well-known fact that any finitely presented group can be realized as the fundamental group of some $n-$dimensional manifold $M$. We recall here a well-known construction for the sake of completeness. Let $G$ be a finitely presented group with presentation 
\begin{equation}
G\cong\langle \, a_1,\dots,a_k\,|\,r_1,\dots,r_\ell\,\rangle.
\end{equation}

\noindent Let us consider $\mathbb S^n$ and then $k$ copies of $\mathbb S^{n-1}\times\mathbb S^1$ and let $M_o$ be the closed $n-$manifold
\begin{equation}
    M_o=\mathbb S^n\,\#\,(\mathbb S^{n-1}\times\mathbb S^1)\,\#\,\cdots\,\#\,(\mathbb S^{n-1}\times\mathbb S^1).
\end{equation}

\noindent Since $\pi_1(\mathbb S^{n-1}\times\mathbb S^1)\cong\mathbb Z$, it readily follows that $\pi_1(M_o)\cong\langle\,a_1,\dots,a_k \mid \phi\rangle$. This is a consequence of the following Lemma whose proof is an application of Seifert-van Kampen's Theorem.

\begin{lem}\label{lem:prodgroup}
Let $n\ge3$ and let $M$ and $N$ be closed smooth $n-$manifolds. Then 
\begin{equation}
    \pi_1(M\,\#\,N)=\pi_1(M)*\pi_1(N).
\end{equation}
\end{lem}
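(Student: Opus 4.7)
The plan is to apply the Seifert--van Kampen theorem to a suitable open cover of $M \# N$, with the dimension hypothesis $n \ge 3$ entering twice: once to ensure that removing a ball does not change the fundamental group, and once to ensure that the gluing sphere $S^{n-1}$ is simply connected.

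First I would recall the construction of the connected sum. Choose closed coordinate balls $B_M \subset M$ and $B_N \subset N$, and form
\[
    M \# N \;=\; (M \setminus \mathring{B}_M) \,\cup_{\,S^{n-1}}\, (N \setminus \mathring{B}_N),
\]
where the two boundary spheres are identified via a diffeomorphism. To apply van Kampen I would take slight open thickenings: let $U$ be the image in $M \# N$ of $M \setminus \mathring{B}_M$ together with a small open collar of the gluing sphere pushed into the $N$-side, and define $V$ symmetrically. Then $U$ deformation retracts onto $M \setminus \mathring{B}_M$, $V$ onto $N \setminus \mathring{B}_N$, and $U \cap V$ onto the gluing sphere $S^{n-1}$. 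Both $U$ and $V$ are open and path-connected, and their intersection is path-connected.

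Next I would compute the fundamental groups of the pieces. Since $\dim \partial B_M = n-1 \ge 2$, a general position argument shows that any loop in $M \setminus \mathring{B}_M$ is freely homotopic in $M$ to the same loop, and any null-homotopy in $M$ can be perturbed to avoid the ball, so the inclusion induces an isomorphism $\pi_1(M \setminus \mathring{B}_M) \cong \pi_1(M)$; equivalently, $\pi_1(U) \cong \pi_1(M)$. Similarly $\pi_1(V) \cong \pi_1(N)$. Furthermore, because $n \ge 3$, the sphere $S^{n-1}$ is simply connected, so $\pi_1(U \cap V) = 1$.

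Finally, Seifert--van Kampen yields
\[
    \pi_1(M \# N) \;=\; \pi_1(U) *_{\pi_1(U \cap V)} \pi_1(V) \;=\; \pi_1(M) *_{1} \pi_1(N) \;=\; \pi_1(M) * \pi_1(N),
\]
as claimed. The main delicate point, and the only place where care is needed, is verifying that removing an open ball from an $n$-manifold with $n \ge 3$ does not alter the fundamental group; this is a standard transversality/general position argument, but it is precisely where the hypothesis $n \ge 3$ is essential (the statement fails for surfaces, where removing a disk from, say, $S^2$ produces a disk rather than an $S^2$).
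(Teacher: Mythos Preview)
Your proposal is correct and follows exactly the approach the paper indicates: the paper simply states that the lemma is ``an application of Seifert--van Kampen's Theorem'' without further detail, and your argument supplies precisely that standard application, including the two uses of the hypothesis $n\ge 3$ (simple connectivity of $S^{n-1}$ and invariance of $\pi_1$ under removing a ball).
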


\smallskip

\noindent In order to realise a manifold $M$ with fundamental group $G$ we proceed as follows. We can represent a generic relation $r$ by some smooth embedded loop, say $\gamma$, in $M_o$. Let $T_\gamma$ be a tubular neighborhood of $\gamma$ homeomorphic to $\mathbb S^1\times\mathbb D^{n-1}$. By cutting out the tube $T_\gamma$ from $M_o$ we obtain a $n-$manifold whose boundary is homeomorphic to $\mathbb S^1\times\mathbb S^{n-2}$. The crucial observation here is the following fact: Both spaces $\mathbb S^1\times \mathbb D^{n-1}$ and $\mathbb S^{n-2}\times\mathbb D^{2}$ have the same boundary, up to homeomorphism, namely $\mathbb S^1\times\mathbb S^{n-2}$. Then, we glue a copy of $\mathbb S^{n-2}\times\mathbb D^{2}$ along the boundary. Since $\mathbb S^{n-2}\times\mathbb D^{2}$ is simply connected -- notice that here we make the use of the assumption $n\ge4$ -- it can be showed that the resulting space has fundamental group isomorphic to $\langle \, a_1,\dots,a_k\,|\,r\,\rangle$. Let us show this fact.

\smallskip

\noindent The only thing one has to check is the following: Removing the tube $T_\gamma$ and replacing it with $\mathbb S^{n-2} \times \mathbb D^2$ has the effect of killing $\gamma$. Let $M_\gamma$ denote the manifold obtained from $M_o$ by this process. We have that
\begin{equation}
    M_\gamma=\Big( M_o \setminus \text{int}(T_\gamma)\Big) \cup_{\mathbb S^1 \times \mathbb S^{n-2}} \big(\,\mathbb D^2 \times \mathbb S^{n-2}\,\big)
\end{equation}

\noindent Notice that the following holds:
\begin{equation}
\pi_1\Big(M_o \setminus \text{int}(T_\gamma)\Big)\cong\pi_1(M_o)=\langle a_1, \dots, a_k \mid \phi\rangle, \\
\end{equation}
\noindent because two loops in an $n-$dimensional manifold do not link each other if $n\ge4$. Moreover we have that:
\begin{equation}
\pi_1(\mathbb D^2 \times \mathbb S^{n-2}) = \{0\}
\end{equation}
\noindent and 
\begin{equation}
\pi_1(\mathbb S^1 \times \mathbb S^{n-2})\cong\Z= \langle\, z \,\rangle,
\end{equation}

\noindent where $z$ is represented by $\mathbb S^1 \times \{\,*\,\}$ in $\mathbb S^1 \times \mathbb S^{n-2}$. Note that the image of $z$ in $M_\gamma$ is precisely the curve $\gamma$. Therefore by Seifert-van Kampen we have that $\pi_1(M_\gamma) = \langle a_1, \dots, a_{k} \mid r \rangle$.

\medskip

\noindent On our manifold $M_o$, we represent each relator $r_i$ with a smooth loop $\gamma_i$ and, without loss of generality, we may assume all these loops pairwise disjoint. Therefore, by applying this process $\ell$ times we eventually obtain a $4-$manifold $M$ with fundamental group $G$ as desired.

\subsection{Finding a continuous map} For a closed genus $g$ surface $S$, let $G$ be the finite group obtained in Section \S\ref{sec:fpg} and let $M$ be a $n-$manifold with fundamental group $\pi_1(M)\cong G$. The representation in \eqref{eq:proj2} can be now regarded as a representation $\rho\colon\pi_1(S)\longrightarrow \pi_1(M)$. Recall that, from Section \S\ref{sec:fpg} we know $\rho$ has non-geometric kernel. Here we aim to find a continuous map $f\colon S\longrightarrow M$ such that $f_*=\rho$. As a result, we obtain the desired counterexample to the simple loop conjecture in dimension $n$.

\smallskip

\noindent The existence of a mapping $f\colon S\longrightarrow M$ now becomes an obstruction theoretic matter. Notice that both $S$ and $M$ admit a CW structure and we can choose these structures to have only one $0-$cell. Being $S$ a closed genus $g$ surface, we consider it as a CW complex with one $0-$cell, $2g$ $1-$cells and only one $2-$cell. We use $\rho$ to define a mapping $f$ at the level of $1-$skeleton by sending every $1-$cell, say $e$, to a cellular representative of $\rho(e)$. Since $\rho$ is a map of fundamental groups it respects homotopies between paths and thence we can extend $f$ to the $2-$skeleton. In fact, the boundary of the unique $2-$cell is trivial in $\pi_1(S)$ so it is mapped to a trivial loop in $\pi_1(M)$. Therefore there is a continuous map $f\colon S\longrightarrow M$ that realises $\rho$ as a map of fundamental groups as desired. It remains to show that $f$ is $2-$sided. This is the case: In fact, since $M$ is orientable its orientation character $\chi_M$ is trivial and, in particular, $\chi_M \circ f_*(\gamma)=0$ for all $\gamma\in\pi_1(S)$ and hence $\chi_S=\chi_M\circ f_*$ as desired because $S$ is also orientable.
% In general there is an obstruction living in $\text{H}^3\big(S,\,\pi_2(M)\big)$. To extend it you need that the cohomology class of the map sending every 3-cell $e$ of $X$ to $[f(\partial e)]\in\pi_2Y$ is 0 (the addition of a boundary correspond to a modification of $f$ at the 2-skeleton that doesn't change its behaviour on $\pi_1X$). If this obstruction is 0 analogously you can find an obstruction living in $H^4(X,\pi_3Y)$ and so on and so forth. If all groups $H^{n+1}(X,\pi_nY)$ are 0 you can realize your map. A special case is when the universal cover of $Y$ is contractible (i.e. $\pi_iY=0$ for all $i>1$), for example for any hyperbolic manifold.

\subsection{Final remark} The construction above involved orientable manifolds both in the domain and target. It is worth mentioning that, according to Definition \ref{def:sidesmap} a continuous map from a non-orientable surface $S$ to an orientable target $M$ is never $2-$sided. However, a continuous map from an orientable surface $S$ may well be $2-$sided even if the target manifold $M$ is not orientable. Let us give an example of this phenomenon. Let $\rho\colon\pi_1(S)\longrightarrow G$ be a representation into a finitely presented group and let $M$ be an orientable $n-$manifold with fundamental group $G$. We have seen above how to realise a $2-$sided map $f\colon S\longrightarrow M$. Consider $M\,\#\,(\mathbb {RP}^2\,\times\,\mathbb S^{n-2})$; according to Lemma \ref{lem:prodgroup} its fundamental group is isomorphic to $G*\mathbb Z_2$. We now realise a $2-$sided map $f\colon S\longrightarrow M\,\#\,(\mathbb {RP}^2\,\times\,\mathbb S^{n-2})$ as follows. Let $M^\circ$ be defined as the open sub-manifold $M\setminus \mathbb D^n$ and notice that $\pi_1(M^\circ)\cong\pi_1(M)\cong G$. Then we first realise a $2-$sided map $f^\circ\colon S\longrightarrow M^\circ$ exactly as above and then define $f=\imath\circ f^\circ$ where $\imath\colon M^\circ\hookrightarrow M\,\#\,(\mathbb {RP}^2\,\times\,\mathbb S^{n-2})$ is an embedding.

\bibliographystyle{amsalpha}
\bibliography{ceslchd}

\end{document}